\newtheorem{theorem}{Theorem}
\theoremstyle{plain}
\newtheorem{remark}{Remark}
\numberwithin{equation}{section}
\begin{document}
\title[Improving Schwarz Inequality]{Improving Schwarz Inequality in Inner
Product Spaces}
\author[S. S. Dragomir]{Silvestru Sever Dragomir$^{1,2}$}
\address{$^{1}$Mathematics, College of Engineering \& Science\\
Victoria University, PO Box 14428\\
Melbourne City, MC 8001, Australia.}
\email{sever.dragomir@vu.edu.au}
\urladdr{http://rgmia.org/dragomir}
\address{$^{2}$DST-NRF Centre of Excellence \\
in the Mathematical and Statistical Sciences, School of Computer Science \&
Applied Mathematics, University of the Witwatersrand, Private Bag 3,
Johannesburg 2050, South Africa}
\subjclass{46C05; 26D15.}
\keywords{Inner product spaces, Schwarz's inequality}

\begin{abstract}
Some improvements of the celebrated Schwarz inequality in complex inner
product spaces are given. Applications for $n$-tuples of complex numbers are
provided.
\end{abstract}

\maketitle

\section{Introduction}

Let $\left( H,\left\langle \cdot ,\cdot \right\rangle \right) $ be an inner
product space over the real or complex numbers field $\mathbb{K}$. The
following inequality is well known in literature as the \textit{Schwarz
inequality}%
\begin{equation}
\left\Vert x\right\Vert ^{2}\left\Vert y\right\Vert ^{2}\geq \left\vert
\left\langle x,y\right\rangle \right\vert ^{2}\text{ }  \label{S}
\end{equation}%
for any $x,$ $y\in H.$ The equality case holds in (\ref{S}) if and only if
there exists a constant $\lambda \in \mathbb{K}$ such that $x=\lambda y.$
This inequality can be written in an equivalent form as 
\begin{equation}
\left\Vert x\right\Vert \left\Vert y\right\Vert \geq \left\vert \left\langle
x,y\right\rangle \right\vert .  \label{SS}
\end{equation}

Assume that $P:H\rightarrow H$ is an orthogonal projection on $H$, namely,
it satisfies the condition $P^{2}=P=P^{\ast }$. We obviously have in the
operator order of $B(H)$, the Banach algebra of all linear bounded operators
on $H$, that $0\leq P\leq 1_{H}$.

In the recent paper \cite[Eq. (2.6)]{SSDBul} we established among others that%
\begin{equation}
\left\Vert x\right\Vert \left\Vert y\right\Vert \geq \left\langle
Px,x\right\rangle ^{1/2}\left\langle Py,y\right\rangle ^{1/2}+\left\vert
\left\langle x,y\right\rangle -\left\langle Px,y\right\rangle \right\vert 
\label{e.1.0}
\end{equation}%
for any $x,$ $y\in H.$ Since by the triangle inequality we have 
\begin{equation*}
\left\vert \left\langle x,y\right\rangle -\left\langle Px,y\right\rangle
\right\vert \geq \left\vert \left\langle x,y\right\rangle \right\vert
-\left\vert \left\langle Px,y\right\rangle \right\vert 
\end{equation*}%
and by the Schwarz inequality for nonnegative selfadjoint operators we have 
\begin{equation*}
\left\langle Px,x\right\rangle ^{1/2}\left\langle Py,y\right\rangle
^{1/2}\geq \left\vert \left\langle Px,y\right\rangle \right\vert 
\end{equation*}%
for any $x,$ $y\in H,$ then we get from (\ref{e.1.0}) the following
refinement of (\ref{SS})%
\begin{equation}
\left\Vert x\right\Vert \left\Vert y\right\Vert -\left\vert \left\langle
x,y\right\rangle \right\vert \geq \left\langle Px,x\right\rangle
^{1/2}\left\langle Py,y\right\rangle ^{1/2}-\left\vert \left\langle
Px,y\right\rangle \right\vert \geq 0  \label{e.1.0.1}
\end{equation}%
for any $x,$ $y\in H.$

In 1985 the author \cite{SSD} (see also \cite{DS} or \cite[p. 36]{SSDB2})
established the following inequality related to Schwarz inequality\textit{\ }%
\begin{equation}
\left( \left\Vert x\right\Vert ^{2}\left\Vert z\right\Vert ^{2}-\left\vert
\left\langle x,z\right\rangle \right\vert ^{2}\right) \left( \left\Vert
y\right\Vert ^{2}\left\Vert z\right\Vert ^{2}-\left\vert \left\langle
y,z\right\rangle \right\vert ^{2}\right) \geq \left\vert \left\langle
x,y\right\rangle \left\Vert z\right\Vert ^{2}-\left\langle x,z\right\rangle
\left\langle z,y\right\rangle \right\vert ^{2}  \label{S1}
\end{equation}%
for any $x,$ $y,$ $z\in H$ and obtained, as a consequence, the following
refinement of (\ref{SS}):%
\begin{equation}
\left\Vert x\right\Vert \left\Vert y\right\Vert \geq \left\vert \left\langle
x,y\right\rangle -\left\langle x,e\right\rangle \left\langle
e,y\right\rangle \right\vert +\left\vert \left\langle x,e\right\rangle
\left\langle e,y\right\rangle \right\vert \geq \left\vert \left\langle
x,y\right\rangle \right\vert   \label{RS}
\end{equation}%
for any $x,$ $y,$ $e\in H$ with $\left\Vert e\right\Vert =1.$

If we take the square root in (\ref{S1}) and use the triangle inequality, we
get for $x,$ $y,$ $z\in H\setminus \left\{ 0\right\} $ that%
\begin{align*}
& \left( \left\Vert x\right\Vert ^{2}\left\Vert z\right\Vert ^{2}-\left\vert
\left\langle x,z\right\rangle \right\vert ^{2}\right) ^{1/2}\left(
\left\Vert y\right\Vert ^{2}\left\Vert z\right\Vert ^{2}-\left\vert
\left\langle y,z\right\rangle \right\vert ^{2}\right) ^{1/2} \\
& \geq \left\vert \left\langle x,y\right\rangle \left\Vert z\right\Vert
^{2}-\left\langle x,z\right\rangle \left\langle z,y\right\rangle \right\vert
\geq \left\vert \left\langle x,z\right\rangle \left\langle z,y\right\rangle
\right\vert -\left\vert \left\langle x,y\right\rangle \right\vert \left\Vert
z\right\Vert ^{2}
\end{align*}%
which by division with $\left\Vert x\right\Vert ^{2}\left\Vert y\right\Vert
^{2}\left\Vert z\right\Vert ^{2}\neq 0$ produces%
\begin{equation}
\frac{\left\vert \left\langle x,y\right\rangle \right\vert }{\left\Vert
x\right\Vert \left\Vert y\right\Vert }\geq \frac{\left\vert \left\langle
x,z\right\rangle \right\vert }{\left\Vert x\right\Vert \left\Vert
z\right\Vert }\frac{\left\vert \left\langle z,y\right\rangle \right\vert }{%
\left\Vert z\right\Vert \left\Vert y\right\Vert }-\sqrt{1-\frac{\left\vert
\left\langle x,z\right\rangle \right\vert ^{2}}{\left\Vert x\right\Vert
^{2}\left\Vert z\right\Vert ^{2}}}\sqrt{1-\frac{\left\vert \left\langle
y,z\right\rangle \right\vert ^{2}}{\left\Vert y\right\Vert ^{2}\left\Vert
z\right\Vert ^{2}}}.  \label{e.1.1}
\end{equation}

If the angle between the vectors $x,$ $y,$ $\Psi _{xy}\in \left[ 0,\pi /2%
\right] ,$ is defined by \cite{L} 
\begin{equation}
\cos \Psi _{xy}=\frac{\left\vert \left\langle x,y\right\rangle \right\vert }{%
\left\Vert x\right\Vert \left\Vert y\right\Vert },\text{ }x,\text{ }y\neq 0,
\label{e.1.2}
\end{equation}%
then the function $\Psi _{xy}$ is a natural metric on complex projective
space, since is satisfies the inequality \cite{L}%
\begin{equation}
\Psi _{xy}\leq \Psi _{xz}+\Psi _{zy}\text{ for any }x,\text{ }y,\text{ }%
z\neq 0.  \label{e.1.3}
\end{equation}

By using (\ref{e.1.2}) we have by (\ref{e.1.1}) that 
\begin{equation*}
\cos \Psi _{xy}\geq \cos \Psi _{xz}\cos \Psi _{zx}-\sin \Psi _{xz}\sin \Psi
_{zx}=\cos \left( \Psi _{xz}+\Psi _{zx}\right) ,
\end{equation*}%
which is equivalent to (\ref{e.1.3}) since the function $\cos $ is
decreasing on $\left[ 0,\pi \right] .$ This provides a different proof of (%
\ref{e.1.3}) than the one from \cite{L} where it was done by utilising the
celebrated \textit{Kre\u{\i}n's inequality} \cite{K}, see also \cite[p. 56]%
{GR}, 
\begin{equation}
\Phi _{x,y}\leq \Phi _{x,z}+\Phi _{x,z}\text{ for any }x,\text{ }y,\text{ }%
z\neq 0,  \label{Kr}
\end{equation}%
obtained for angles $\Phi _{x,y}$ between two vectors $x,$ $y$, where in
this case $\Phi _{x,y}$ is defined by 
\begin{equation*}
\cos \Phi _{x,y}=\frac{\func{Re}\left\langle x,y\right\rangle }{\left\Vert
x\right\Vert \left\Vert y\right\Vert },\text{ }x,\text{ }y\neq 0.
\end{equation*}

The following inequality has been obtained by Wang and Zhang in \cite{WZ}
(see also \cite[p. 195]{Z})%
\begin{equation}
\sqrt{1-\frac{\left\vert \left\langle x,y\right\rangle \right\vert ^{2}}{%
\left\Vert x\right\Vert ^{2}\left\Vert y\right\Vert ^{2}}}\leq \sqrt{1-\frac{%
\left\vert \left\langle x,z\right\rangle \right\vert ^{2}}{\left\Vert
x\right\Vert ^{2}\left\Vert z\right\Vert ^{2}}}+\sqrt{1-\frac{\left\vert
\left\langle y,z\right\rangle \right\vert ^{2}}{\left\Vert y\right\Vert
^{2}\left\Vert z\right\Vert ^{2}}}  \label{e.1.4}
\end{equation}%
for any $x,$ $y,$ $z\in H\setminus \left\{ 0\right\} .$ Using the above
notations the inequality (\ref{e.1.4}) can be written as \cite{L} 
\begin{equation}
\sin \Psi _{x,y}\leq \sin \Psi _{x,z}+\sin \Psi _{x,z}  \label{e.1.5}
\end{equation}%
for any $x,$ $y,$ $z\in H\setminus \left\{ 0\right\} .$ It also provides
another triangle type inequality complementing the Kre\u{\i}n and Lin
inequalities above.

The corresponding result for the angle $\Phi _{x,y}$ was obtained by Lin in 
\cite{L} as%
\begin{equation}
\sin \Phi _{x,y}\leq \sin \Phi _{x,z}+\sin \Phi _{x,z},\text{ for any }x,%
\text{ }y,\text{ }z\neq 0,  \label{e.1.6}
\end{equation}%
or, equivalently, as 
\begin{equation}
\sqrt{1-\frac{\left\vert \func{Re}\left\langle x,y\right\rangle \right\vert
^{2}}{\left\Vert x\right\Vert ^{2}\left\Vert y\right\Vert ^{2}}}\leq \sqrt{1-%
\frac{\left\vert \func{Re}\left\langle x,z\right\rangle \right\vert ^{2}}{%
\left\Vert x\right\Vert ^{2}\left\Vert z\right\Vert ^{2}}}+\sqrt{1-\frac{%
\left\vert \func{Re}\left\langle y,z\right\rangle \right\vert ^{2}}{%
\left\Vert y\right\Vert ^{2}\left\Vert z\right\Vert ^{2}}}  \label{e.1.7}
\end{equation}%
for any $x,$ $y,$ $z\neq 0.$

In \cite{L} the author has also shown that, in fact, the inequalities (\ref%
{e.1.4}) and (\ref{e.1.7}) can be extended for any power $p>2,$ namely as 
\begin{equation}
\left( 1-\frac{\left\vert \left\langle x,y\right\rangle \right\vert ^{p}}{%
\left\Vert x\right\Vert ^{p}\left\Vert y\right\Vert ^{p}}\right) ^{1/p}\leq
\left( 1-\frac{\left\vert \left\langle x,z\right\rangle \right\vert ^{p}}{%
\left\Vert x\right\Vert ^{p}\left\Vert z\right\Vert ^{p}}\right)
^{1/p}+\left( 1-\frac{\left\vert \left\langle y,z\right\rangle \right\vert
^{p}}{\left\Vert y\right\Vert ^{p}\left\Vert z\right\Vert ^{p}}\right) ^{1/p}
\label{e.1.8}
\end{equation}%
and%
\begin{equation}
\left( 1-\frac{\left\vert \func{Re}\left\langle x,y\right\rangle \right\vert
^{p}}{\left\Vert x\right\Vert ^{p}\left\Vert y\right\Vert ^{p}}\right)
^{1/p}\leq \left( 1-\frac{\left\vert \func{Re}\left\langle x,z\right\rangle
\right\vert ^{p}}{\left\Vert x\right\Vert ^{p}\left\Vert z\right\Vert ^{p}}%
\right) ^{1/p}+\left( 1-\frac{\left\vert \func{Re}\left\langle
y,z\right\rangle \right\vert ^{p}}{\left\Vert y\right\Vert ^{p}\left\Vert
z\right\Vert ^{p}}\right) ^{1/p}  \label{e.1.9}
\end{equation}%
for any $x,$ $y,$ $z\neq 0.$

In this paper we obtain some improvements of Schwarz inequality in complex
inner product spaces as follows. For various inequalities related to this
famous result see the monographs \cite{SSDB1} and \cite{SSDB2}.

\section{Main Results}

Employing Lin's inequalities (\ref{e.1.8}) and (\ref{e.1.9}) we can obtain
the following refinements of Schwarz's inequality.

\begin{theorem}
\label{t.2.0}Let $x,$ $y,$ $e\in H$ with $\left\Vert e\right\Vert =1$ and $%
p\geq 2.$ Then we have the following refinements of Schwarz inequality%
\begin{equation}
\left\Vert x\right\Vert ^{p}\left\Vert y\right\Vert ^{p}-\left\vert
\left\langle x,y\right\rangle \right\vert ^{p}\geq \left( \det \left[ 
\begin{array}{ccc}
\left\Vert x\right\Vert &  & \left( \left\Vert x\right\Vert ^{p}-\left\vert
\left\langle x,e\right\rangle \right\vert ^{p}\right) ^{1/p} \\ 
&  &  \\ 
\left\Vert y\right\Vert &  & \left( \left\Vert y\right\Vert ^{p}-\left\vert
\left\langle y,e\right\rangle \right\vert ^{p}\right) ^{1/p}%
\end{array}%
\right] \right) ^{p}  \label{e.1.10}
\end{equation}%
and%
\begin{equation}
\left\Vert x\right\Vert ^{p}\left\Vert y\right\Vert ^{p}-\left\vert \func{Re}%
\left\langle x,y\right\rangle \right\vert ^{p}\geq \left( \det \left[ 
\begin{array}{ccc}
\left\Vert x\right\Vert &  & \left( \left\Vert x\right\Vert ^{p}-\left\vert 
\func{Re}\left\langle x,e\right\rangle \right\vert ^{p}\right) ^{1/p} \\ 
&  &  \\ 
\left\Vert y\right\Vert &  & \left( \left\Vert y\right\Vert ^{p}-\left\vert 
\func{Re}\left\langle y,e\right\rangle \right\vert ^{p}\right) ^{1/p}%
\end{array}%
\right] \right) ^{p}.  \label{e.1.11}
\end{equation}
\end{theorem}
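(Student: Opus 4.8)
The plan is to derive both (\ref{e.1.10}) and (\ref{e.1.11}) directly from Lin's inequalities (\ref{e.1.8}) and (\ref{e.1.9}), exploiting the fact that those inequalities hold for \emph{every} triple of nonzero vectors, so we are free to put the unit vector $e$ into the ``middle'' slot. First I would dispose of the degenerate cases: if $x=0$ or $y=0$ then one row of the matrix in (\ref{e.1.10}) (resp. (\ref{e.1.11})) is identically zero, so its determinant vanishes, and the left-hand side vanishes as well, so the inequalities read $0\ge 0$. Assume henceforth that $x,y\neq 0$ and write
\[
\alpha _{x}=\left( 1-\frac{|\langle x,e\rangle |^{p}}{\|x\|^{p}}\right)
^{1/p},\quad \alpha _{y}=\left( 1-\frac{|\langle y,e\rangle |^{p}}{\|y\|^{p}}
\right) ^{1/p},\quad \beta =\left( 1-\frac{|\langle x,y\rangle |^{p}}{
\|x\|^{p}\|y\|^{p}}\right) ^{1/p},
\]
all three well defined and lying in $[0,1]$ by Schwarz. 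Since $\|e\|=1$ we have $\|x\|\,\alpha _{x}=\left( \|x\|^{p}-|\langle x,e\rangle |^{p}\right) ^{1/p}$ and similarly for $y$, so the determinant in (\ref{e.1.10}) equals $\|x\|\|y\|\left( \alpha _{y}-\alpha _{x}\right) $.

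Next I would apply (\ref{e.1.8}) twice, each time with $e$ in the role of the middle vector. The substitution $(x,y,z)\mapsto (x,e,y)$ yields $\alpha _{x}\le \beta +\alpha _{y}$, and $(x,y,z)\mapsto (y,e,x)$ yields $\alpha _{y}\le \beta +\alpha _{x}$; here one only uses $|\langle e,u\rangle |=|\langle u,e\rangle |$ and $|\langle y,x\rangle |=|\langle x,y\rangle |$. Together these two inequalities say exactly that $\beta \ge |\alpha _{x}-\alpha _{y}|$, a ``reverse triangle'' form of Lin's inequality. Since $p\ge 2>0$ and both sides are nonnegative, raising to the $p$-th power and multiplying by $\|x\|^{p}\|y\|^{p}$ gives
\[
\|x\|^{p}\|y\|^{p}-|\langle x,y\rangle |^{p}=\|x\|^{p}\|y\|^{p}\,\beta ^{p}\ \ge
\ \|x\|^{p}\|y\|^{p}\,|\alpha _{y}-\alpha _{x}|^{p}=\bigl| \|x\|\left(
\|y\|^{p}-|\langle y,e\rangle |^{p}\right) ^{1/p}-\|y\|\left( \|x\|^{p}-|\langle
x,e\rangle |^{p}\right) ^{1/p}\bigr| ^{p},
\]
and the expression inside the modulus is precisely the determinant in (\ref{e.1.10}). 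Choosing the labelling of $x,y$ so that this determinant is nonnegative costs nothing, since the left-hand side is symmetric in $x,y$, and then (\ref{e.1.10}) follows as stated.

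For (\ref{e.1.11}) I would run the same argument verbatim with (\ref{e.1.9}) in place of (\ref{e.1.8}) and $\func{Re}\langle \cdot ,\cdot \rangle $ in place of $|\langle \cdot ,\cdot \rangle |$ throughout; the only additional remark needed is $\func{Re}\langle e,u\rangle =\func{Re}\langle u,e\rangle $, immediate from $\langle e,u\rangle =\overline{\langle u,e\rangle }$. I do not expect a genuine obstacle in this argument: the entire content is the observation that Lin's triangle inequality, used with $e$ in the middle slot and in both orders, collapses to the lower bound $\beta \ge |\alpha _{x}-\alpha _{y}|$. The only points requiring a line of care are the degenerate cases $x=0$ or $y=0$, handled at the outset, and the sign convention for the $p$-th power of the determinant when $p$ is not an even integer — the argument really produces a lower bound by the $p$-th power of the \emph{modulus} of the determinant, which is what the statement intends.
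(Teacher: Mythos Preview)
Your proposal is correct and follows essentially the same route as the paper: both arguments observe that Lin's inequalities (\ref{e.1.8}) and (\ref{e.1.9}) give the reverse triangle inequality $\bigl|d_p(x,e)-d_p(y,e)\bigr|\le d_p(x,y)$ (you derive it by applying (\ref{e.1.8}) twice with $e$ in the second slot, the paper simply invokes the ``continuity property of the distance''), then raise to the $p$-th power and multiply through by $\|x\|^{p}\|y\|^{p}$. Your treatment is in fact slightly more careful, since you explicitly handle the degenerate cases $x=0$ or $y=0$ and flag the sign issue with the $p$-th power of the determinant.
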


\begin{proof}
We observe that, by (\ref{e.1.8}) and (\ref{e.1.9}) 
\begin{equation*}
d_{p}\left( x,y\right) :=\left( 1-\frac{\left\vert \left\langle
x,y\right\rangle \right\vert ^{p}}{\left\Vert x\right\Vert ^{p}\left\Vert
y\right\Vert ^{p}}\right) ^{1/p}\text{ and }\delta _{p}\left( x,y\right)
:=\left( 1-\frac{\left\vert \func{Re}\left\langle x,y\right\rangle
\right\vert ^{p}}{\left\Vert x\right\Vert ^{p}\left\Vert y\right\Vert ^{p}}%
\right) ^{1/p}\text{ }
\end{equation*}%
are distances and by the continuity property of the distance $d,$ namely%
\begin{equation*}
\left\vert d\left( x,z\right) -d\left( y,z\right) \right\vert \leq d\left(
x,y\right)
\end{equation*}%
we get 
\begin{equation}
\left\vert \left( 1-\frac{\left\vert \left\langle x,e\right\rangle
\right\vert ^{p}}{\left\Vert x\right\Vert ^{p}}\right) ^{1/p}-\left( 1-\frac{%
\left\vert \left\langle y,e\right\rangle \right\vert ^{p}}{\left\Vert
y\right\Vert ^{p}}\right) ^{1/p}\right\vert \leq \left( 1-\frac{\left\vert
\left\langle x,y\right\rangle \right\vert ^{p}}{\left\Vert x\right\Vert
^{p}\left\Vert y\right\Vert ^{p}}\right) ^{1/p}  \label{e.1.12}
\end{equation}%
and%
\begin{equation}
\left\vert \left( 1-\frac{\left\vert \func{Re}\left\langle x,e\right\rangle
\right\vert ^{p}}{\left\Vert x\right\Vert ^{p}}\right) ^{1/p}-\left( 1-\frac{%
\left\vert \func{Re}\left\langle y,e\right\rangle \right\vert ^{p}}{%
\left\Vert y\right\Vert ^{p}}\right) ^{1/p}\right\vert \leq \left( 1-\frac{%
\left\vert \func{Re}\left\langle x,y\right\rangle \right\vert ^{p}}{%
\left\Vert x\right\Vert ^{p}\left\Vert y\right\Vert ^{p}}\right) ^{1/p}
\label{e.1.13}
\end{equation}%
for any $x,$ $y\neq 0$ and $e\in H$ with $\left\Vert e\right\Vert =1.$

If we take the power $p$ in (\ref{e.1.12}) and (\ref{e.1.13}) and multiply
with $\left\Vert x\right\Vert ^{p}\left\Vert y\right\Vert ^{p}>0,$ then we
get the desired results (\ref{e.1.10}) and (\ref{e.1.11}).
\end{proof}

The following similar result can be stated as well:

\begin{theorem}
\label{t.2.1}Let $x,$ $y,$ $e\in H$ with $\left\Vert e\right\Vert =1.$ Then
we have the following refinement of Schwarz inequality%
\begin{equation}
\left\Vert x\right\Vert ^{2}\left\Vert y\right\Vert ^{2}-\left\vert
\left\langle x,y\right\rangle \right\vert ^{2}\geq \left( \det \left[ 
\begin{array}{ccc}
\left\vert \left\langle x,e\right\rangle \right\vert &  & \left( \left\Vert
x\right\Vert ^{2}-\left\vert \left\langle x,e\right\rangle \right\vert
^{2}\right) ^{1/2} \\ 
&  &  \\ 
\left\vert \left\langle y,e\right\rangle \right\vert &  & \left( \left\Vert
y\right\Vert ^{2}-\left\vert \left\langle y,e\right\rangle \right\vert
^{2}\right) ^{1/2}%
\end{array}%
\right] \right) ^{2}.  \label{e.2.0}
\end{equation}
\end{theorem}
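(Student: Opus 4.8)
The plan is to reduce (\ref{e.2.0}) to a one-line application of the triangle inequality and the Schwarz inequality (\ref{SS}), after orthogonally decomposing $x$ and $y$ along $e$. Write $x=\left\langle x,e\right\rangle e+x_{\perp }$ and $y=\left\langle y,e\right\rangle e+y_{\perp }$, where $x_{\perp }:=x-\left\langle x,e\right\rangle e$ and $y_{\perp }:=y-\left\langle y,e\right\rangle e$ are orthogonal to $e$. Since $\left\Vert e\right\Vert =1$, the Pythagoras identity gives $\left\Vert x_{\perp }\right\Vert ^{2}=\left\Vert x\right\Vert ^{2}-\left\vert \left\langle x,e\right\rangle \right\vert ^{2}$ and $\left\Vert y_{\perp }\right\Vert ^{2}=\left\Vert y\right\Vert ^{2}-\left\vert \left\langle y,e\right\rangle \right\vert ^{2}$, so the second column of the matrix in (\ref{e.2.0}) consists precisely of $\left\Vert x_{\perp }\right\Vert $ and $\left\Vert y_{\perp }\right\Vert $.

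First I would rewrite the right-hand side of (\ref{e.2.0}). Expanding the $2\times 2$ determinant and invoking the elementary identity $\left( ps-qr\right) ^{2}=\left( p^{2}+q^{2}\right) \left( r^{2}+s^{2}\right) -\left( pr+qs\right) ^{2}$ with $p=\left\vert \left\langle x,e\right\rangle \right\vert ,$ $q=\left\Vert x_{\perp }\right\Vert ,$ $r=\left\vert \left\langle y,e\right\rangle \right\vert ,$ $s=\left\Vert y_{\perp }\right\Vert ,$ together with $\left\Vert x\right\Vert ^{2}=\left\vert \left\langle x,e\right\rangle \right\vert ^{2}+\left\Vert x_{\perp }\right\Vert ^{2}$ and $\left\Vert y\right\Vert ^{2}=\left\vert \left\langle y,e\right\rangle \right\vert ^{2}+\left\Vert y_{\perp }\right\Vert ^{2},$ the right-hand side of (\ref{e.2.0}) becomes
\begin{equation*}
\left\Vert x\right\Vert ^{2}\left\Vert y\right\Vert ^{2}-\left( \left\vert \left\langle x,e\right\rangle \right\vert \left\vert \left\langle y,e\right\rangle \right\vert +\left\Vert x_{\perp }\right\Vert \left\Vert y_{\perp }\right\Vert \right) ^{2}.
\end{equation*}
Thus (\ref{e.2.0}) is equivalent to the single inequality
\begin{equation*}
\left\vert \left\langle x,y\right\rangle \right\vert \leq \left\vert \left\langle x,e\right\rangle \right\vert \left\vert \left\langle y,e\right\rangle \right\vert +\left\Vert x_{\perp }\right\Vert \left\Vert y_{\perp }\right\Vert .
\end{equation*}

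To close the argument I would use that $x_{\perp }$ and $y_{\perp }$ are orthogonal to $e$ to obtain $\left\langle x,y\right\rangle =\left\langle x,e\right\rangle \left\langle e,y\right\rangle +\left\langle x_{\perp },y_{\perp }\right\rangle ,$ and then estimate, by the triangle inequality followed by Schwarz's inequality (\ref{SS}) applied to the pair $x_{\perp },y_{\perp },$
\begin{equation*}
\left\vert \left\langle x,y\right\rangle \right\vert \leq \left\vert \left\langle x,e\right\rangle \right\vert \left\vert \left\langle y,e\right\rangle \right\vert +\left\vert \left\langle x_{\perp },y_{\perp }\right\rangle \right\vert \leq \left\vert \left\langle x,e\right\rangle \right\vert \left\vert \left\langle y,e\right\rangle \right\vert +\left\Vert x_{\perp }\right\Vert \left\Vert y_{\perp }\right\Vert ,
\end{equation*}
where we also used $\left\vert \left\langle e,y\right\rangle \right\vert =\left\vert \left\langle y,e\right\rangle \right\vert .$ Reversing the equivalence yields (\ref{e.2.0}). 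I expect the only step requiring care to be the algebraic bookkeeping in rewriting the determinant via the identity above; everything else is routine. Alternatively, one can avoid that identity entirely by expanding both $\left\Vert x\right\Vert ^{2}\left\Vert y\right\Vert ^{2}-\left\vert \left\langle x,y\right\rangle \right\vert ^{2}$ and the right-hand side of (\ref{e.2.0}) directly in terms of $\left\langle x,e\right\rangle ,$ $\left\langle y,e\right\rangle ,$ $x_{\perp },$ $y_{\perp }$ and comparing term by term, the surviving cross term being controlled by $\func{Re}z\leq \left\vert z\right\vert $ together with Schwarz's inequality for $x_{\perp },y_{\perp }.$
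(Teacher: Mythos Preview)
Your proof is correct and considerably shorter than the paper's. The paper proceeds in a very different, computational manner: it applies Schwarz's inequality to the pair $x-\alpha e,\ y-\overline{\beta}e$, expands $\left\Vert x-\alpha e\right\Vert^{2}\left\Vert y-\overline{\beta}e\right\Vert^{2}$ at length, then chooses $\beta=\alpha\overline{\left\langle y,e\right\rangle}/(\alpha-\left\langle x,e\right\rangle)$ so that the inner product collapses to $\left\langle x,y\right\rangle$, parametrises $\alpha=\left\langle x,e\right\rangle+t$ with $t\in\mathbb{R}$, and finally picks the value $t_{0}^{2}=\left\vert\left\langle x,e\right\rangle\left\langle y,e\right\rangle\right\vert\sqrt{(\left\Vert x\right\Vert^{2}-\left\vert\left\langle x,e\right\rangle\right\vert^{2})/(\left\Vert y\right\Vert^{2}-\left\vert\left\langle y,e\right\rangle\right\vert^{2})}$ that makes the remainder equal to the square of the determinant; degenerate cases ($\left\langle x,e\right\rangle=0$, $\left\Vert x\right\Vert=\left\vert\left\langle x,e\right\rangle\right\vert$, etc.) are then handled separately.

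Your route---orthogonal decomposition along $e$, the Lagrange identity $(ps-qr)^{2}=(p^{2}+q^{2})(r^{2}+s^{2})-(pr+qs)^{2}$, and the single estimate $\left\vert\left\langle x,y\right\rangle\right\vert\leq\left\vert\left\langle x,e\right\rangle\right\vert\left\vert\left\langle y,e\right\rangle\right\vert+\left\Vert x_{\perp}\right\Vert\left\Vert y_{\perp}\right\Vert$---replaces all of that optimisation by a structural observation, handles the degenerate cases automatically, and makes transparent that (\ref{e.2.0}) is in fact equivalent to the Buzano-type bound $\left\vert\left\langle x,y\right\rangle\right\vert\leq\left\vert\left\langle x,e\right\rangle\left\langle e,y\right\rangle\right\vert+\left\vert\left\langle x_{\perp},y_{\perp}\right\rangle\right\vert$ followed by Schwarz on the orthogonal components. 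What the paper's longer argument buys is that it derives the result from scratch without recognising this equivalence in advance; your argument buys clarity and brevity.
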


\begin{proof}
We have by Schwarz's inequality that%
\begin{equation}
\left\vert \left\langle x-\alpha e,y-\overline{\beta }e\right\rangle
\right\vert ^{2}\leq \left\Vert x-\alpha e\right\Vert ^{2}\left\Vert y-%
\overline{\beta }e\right\Vert ^{2}  \label{e.2.1}
\end{equation}%
for any $x,$ $y,$ $e\in H$ and $\alpha ,$ $\beta \in \mathbb{C}.$

Since $\left\Vert e\right\Vert =1,$ then%
\begin{equation}
\left\langle x-\alpha e,y-\overline{\beta }e\right\rangle =\left\langle
x,y\right\rangle -\alpha \left\langle e,y\right\rangle -\beta \left\langle
x,e\right\rangle +\alpha \beta ,  \label{e.2.2}
\end{equation}%
\begin{equation*}
\left\Vert x-\alpha e\right\Vert ^{2}=\left\Vert x\right\Vert ^{2}-\left( 2%
\func{Re}\left[ \overline{\alpha }\left\langle x,e\right\rangle \right]
-\left\vert \alpha \right\vert ^{2}\right)
\end{equation*}%
and 
\begin{equation*}
\left\Vert y-\overline{\beta }e\right\Vert ^{2}=\left\Vert y\right\Vert
^{2}-\left( 2\func{Re}\left[ \beta \left\langle y,e\right\rangle \right]
-\left\vert \beta \right\vert ^{2}\right) .
\end{equation*}%
This implies that 
\begin{align}
& \left\Vert x-\alpha e\right\Vert ^{2}\left\Vert y-\overline{\beta }%
e\right\Vert ^{2}  \label{e.2.3} \\
& =\left[ \left\Vert x\right\Vert ^{2}-\left( 2\func{Re}\left[ \overline{%
\alpha }\left\langle x,e\right\rangle \right] -\left\vert \alpha \right\vert
^{2}\right) \right] \left[ \left\Vert y\right\Vert ^{2}-\left( 2\func{Re}%
\left[ \beta \left\langle x,e\right\rangle \right] -\left\vert \beta
\right\vert ^{2}\right) \right]  \notag \\
& =\left\Vert x\right\Vert ^{2}\left\Vert y\right\Vert ^{2}-\left\Vert
y\right\Vert ^{2}\left( 2\func{Re}\left[ \overline{\alpha }\left\langle
x,e\right\rangle \right] -\left\vert \alpha \right\vert ^{2}\right)
-\left\Vert x\right\Vert ^{2}\left( 2\func{Re}\left[ \beta \left\langle
y,e\right\rangle \right] -\left\vert \beta \right\vert ^{2}\right)  \notag \\
& +\left( 2\func{Re}\left[ \overline{\alpha }\left\langle x,e\right\rangle %
\right] -\left\vert \alpha \right\vert ^{2}\right) \left( 2\func{Re}\left[
\beta \left\langle y,e\right\rangle \right] -\left\vert \beta \right\vert
^{2}\right)  \notag \\
& =\left\Vert x\right\Vert ^{2}\left\Vert y\right\Vert ^{2}-\left(
\left\Vert y\right\Vert ^{2}-\left\vert \left\langle y,e\right\rangle
\right\vert ^{2}\right) \left( 2\func{Re}\left[ \overline{\alpha }%
\left\langle x,e\right\rangle \right] -\left\vert \alpha \right\vert
^{2}\right)  \notag \\
& -\left( \left\Vert x\right\Vert ^{2}-\left\vert \left\langle
x,e\right\rangle \right\vert ^{2}\right) \left( 2\func{Re}\left[ \beta
\left\langle y,e\right\rangle \right] -\left\vert \beta \right\vert
^{2}\right)  \notag \\
& -\left\vert \left\langle y,e\right\rangle \right\vert ^{2}\left( 2\func{Re}%
\left[ \overline{\alpha }\left\langle x,e\right\rangle \right] -\left\vert
\alpha \right\vert ^{2}\right) -\left\vert \left\langle x,e\right\rangle
\right\vert ^{2}\left( 2\func{Re}\left[ \beta \left\langle y,e\right\rangle %
\right] -\left\vert \beta \right\vert ^{2}\right)  \notag \\
& +\left( 2\func{Re}\left[ \overline{\alpha }\left\langle x,e\right\rangle %
\right] -\left\vert \alpha \right\vert ^{2}\right) \left( 2\func{Re}\left[
\beta \left\langle y,e\right\rangle \right] -\left\vert \beta \right\vert
^{2}\right)  \notag \\
& +\left\vert \left\langle y,e\right\rangle \right\vert ^{2}\left\vert
\left\langle x,e\right\rangle \right\vert ^{2}-\left\vert \left\langle
y,e\right\rangle \right\vert ^{2}\left\vert \left\langle x,e\right\rangle
\right\vert ^{2}  \notag \\
& =\left\Vert x\right\Vert ^{2}\left\Vert y\right\Vert ^{2}-\left(
\left\Vert y\right\Vert ^{2}-\left\vert \left\langle y,e\right\rangle
\right\vert ^{2}\right) \left( 2\func{Re}\left[ \overline{\alpha }%
\left\langle x,e\right\rangle \right] -\left\vert \alpha \right\vert
^{2}\right)  \notag \\
& -\left( \left\Vert x\right\Vert ^{2}-\left\vert \left\langle
x,e\right\rangle \right\vert ^{2}\right) \left( 2\func{Re}\left[ \beta
\left\langle y,e\right\rangle \right] -\left\vert \beta \right\vert
^{2}\right)  \notag \\
& +\left[ \left\vert \left\langle x,e\right\rangle \right\vert ^{2}-\left( 2%
\func{Re}\left[ \overline{\alpha }\left\langle x,e\right\rangle \right]
-\left\vert \alpha \right\vert ^{2}\right) \right] \left[ \left\vert
\left\langle y,e\right\rangle \right\vert ^{2}-\left( 2\func{Re}\left[ \beta
\left\langle y,e\right\rangle \right] -\left\vert \beta \right\vert
^{2}\right) \right]  \notag \\
& -\left\vert \left\langle y,e\right\rangle \right\vert ^{2}\left\vert
\left\langle x,e\right\rangle \right\vert ^{2}.  \notag
\end{align}%
Observe that 
\begin{equation*}
\left\vert \left\langle x,e\right\rangle \right\vert ^{2}-\left( 2\func{Re}%
\left[ \overline{\alpha }\left\langle x,e\right\rangle \right] -\left\vert
\alpha \right\vert ^{2}\right) =\left\vert \left\langle x,e\right\rangle
-\alpha \right\vert ^{2}
\end{equation*}%
and%
\begin{equation*}
\left\vert \left\langle y,e\right\rangle \right\vert ^{2}-\left( 2\func{Re}%
\left[ \beta \left\langle y,e\right\rangle \right] -\left\vert \beta
\right\vert ^{2}\right) =\left\vert \left\langle y,e\right\rangle -\overline{%
\beta }\right\vert ^{2}.
\end{equation*}%
Therefore, by (\ref{e.2.3}) we get%
\begin{align}
& \left\Vert x-\alpha e\right\Vert ^{2}\left\Vert y-\overline{\beta }%
e\right\Vert ^{2}  \label{e.2.3.1} \\
& =\left\Vert x\right\Vert ^{2}\left\Vert y\right\Vert ^{2}-\left(
\left\Vert y\right\Vert ^{2}-\left\vert \left\langle y,e\right\rangle
\right\vert ^{2}\right) \left( 2\func{Re}\left[ \overline{\alpha }%
\left\langle x,e\right\rangle \right] -\left\vert \alpha \right\vert
^{2}\right)  \notag \\
& -\left( \left\Vert x\right\Vert ^{2}-\left\vert \left\langle
x,e\right\rangle \right\vert ^{2}\right) \left( 2\func{Re}\left[ \beta
\left\langle y,e\right\rangle \right] -\left\vert \beta \right\vert
^{2}\right)  \notag \\
& +\left\vert \left\langle x,e\right\rangle -\alpha \right\vert
^{2}\left\vert \left\langle y,e\right\rangle -\overline{\beta }\right\vert
^{2}-\left\vert \left\langle y,e\right\rangle \right\vert ^{2}\left\vert
\left\langle x,e\right\rangle \right\vert ^{2}.  \notag
\end{align}%
Let $\alpha \in \mathbb{C}$ with $\alpha \neq \left\langle x,e\right\rangle $
and put%
\begin{equation}
\beta :=\frac{\alpha \overline{\left\langle y,e\right\rangle }}{\alpha
-\left\langle x,e\right\rangle }.  \label{e.2.4}
\end{equation}%
Then%
\begin{align*}
\left\vert \left\langle x,e\right\rangle -\alpha \right\vert ^{2}\left\vert
\left\langle y,e\right\rangle -\overline{\beta }\right\vert ^{2}&
=\left\vert \left\langle x,e\right\rangle -\alpha \right\vert ^{2}\left\vert
\left\langle y,e\right\rangle -\overline{\left( \frac{\alpha \overline{%
\left\langle y,e\right\rangle }}{\alpha -\left\langle x,e\right\rangle }%
\right) }\right\vert ^{2} \\
& =\left\vert \left\langle x,e\right\rangle -\alpha \right\vert
^{2}\left\vert \left\langle y,e\right\rangle -\frac{\overline{\alpha }%
\left\langle y,e\right\rangle }{\overline{\alpha }-\overline{\left\langle
x,e\right\rangle }}\right\vert ^{2} \\
& =\left\vert \left\langle y,e\right\rangle \right\vert ^{2}\left\vert
\left\langle x,e\right\rangle -\alpha \right\vert ^{2}\left\vert \frac{%
\overline{\left\langle x,e\right\rangle }}{\overline{\alpha }-\overline{%
\left\langle x,e\right\rangle }}\right\vert ^{2} \\
& =\left\vert \left\langle y,e\right\rangle \right\vert ^{2}\left\vert
\left\langle x,e\right\rangle \right\vert ^{2}
\end{align*}%
and%
\begin{equation*}
\alpha \beta =\alpha \left\langle e,y\right\rangle +\beta \left\langle
x,e\right\rangle .
\end{equation*}%
For these choices of $\alpha $ and $\beta $ we have by (\ref{e.2.1})-(\ref%
{e.2.3.1}) that%
\begin{align}
\left\vert \left\langle x,y\right\rangle \right\vert ^{2}& \leq \left\Vert
x\right\Vert ^{2}\left\Vert y\right\Vert ^{2}-\left( \left\Vert y\right\Vert
^{2}-\left\vert \left\langle y,e\right\rangle \right\vert ^{2}\right) \left(
2\func{Re}\left[ \overline{\alpha }\left\langle x,e\right\rangle \right]
-\left\vert \alpha \right\vert ^{2}\right)  \label{e.2.5} \\
& -\left( \left\Vert x\right\Vert ^{2}-\left\vert \left\langle
x,e\right\rangle \right\vert ^{2}\right) \left( 2\func{Re}\left[ \beta
\left\langle y,e\right\rangle \right] -\left\vert \beta \right\vert
^{2}\right) .  \notag
\end{align}%
By (\ref{e.2.4}) we also have 
\begin{equation}
2\func{Re}\left[ \beta \left\langle y,e\right\rangle \right] -\left\vert
\beta \right\vert ^{2}=\left\vert \left\langle y,e\right\rangle \right\vert
^{2}\left[ 2\func{Re}\left[ \frac{\alpha }{\alpha -\left\langle
x,e\right\rangle }\right] -\left\vert \frac{\alpha }{\alpha -\left\langle
x,e\right\rangle }\right\vert ^{2}\right] .  \label{e.2.5.1}
\end{equation}%
Take 
\begin{equation}
\alpha =\left\langle x,e\right\rangle +t\text{ with }t\in \mathbb{R},\text{ }%
t\neq 0.  \label{e.2.5.2}
\end{equation}%
Then by (\ref{e.2.5.1}) we have 
\begin{align*}
B\left( x,y,e,t\right) & :=\left( \left\Vert y\right\Vert ^{2}-\left\vert
\left\langle y,e\right\rangle \right\vert ^{2}\right) \left( 2\func{Re}\left[
\overline{\alpha }\left\langle x,e\right\rangle \right] -\left\vert \alpha
\right\vert ^{2}\right) \\
& +\left( \left\Vert x\right\Vert ^{2}-\left\vert \left\langle
x,e\right\rangle \right\vert ^{2}\right) \left( 2\func{Re}\left[ \beta
\left\langle y,e\right\rangle \right] -\left\vert \beta \right\vert
^{2}\right) \\
& =\left( \left\Vert y\right\Vert ^{2}-\left\vert \left\langle
y,e\right\rangle \right\vert ^{2}\right) \left( 2\func{Re}\left[ \left( 
\overline{\left\langle x,e\right\rangle +t}\right) \left\langle
x,e\right\rangle \right] -\left\vert \left\langle x,e\right\rangle
+t\right\vert ^{2}\right) \\
& +\left( \left\Vert x\right\Vert ^{2}-\left\vert \left\langle
x,e\right\rangle \right\vert ^{2}\right) \left\vert \left\langle
y,e\right\rangle \right\vert ^{2}\left[ 2\func{Re}\left[ \frac{\left\langle
x,e\right\rangle +t}{t}\right] -\left\vert \frac{\left\langle
x,e\right\rangle +t}{t}\right\vert ^{2}\right] .
\end{align*}%
Since 
\begin{align*}
& 2\func{Re}\left[ \left( \overline{\left\langle x,e\right\rangle +t}\right)
\left\langle x,e\right\rangle \right] -\left\vert \left\langle
x,e\right\rangle +t\right\vert ^{2} \\
& =2\func{Re}\left[ \left\vert \left\langle x,e\right\rangle \right\vert
^{2}+t\left\langle x,e\right\rangle \right] -\left\vert \left\langle
x,e\right\rangle \right\vert ^{2}-2t\func{Re}\left\langle x,e\right\rangle
-t^{2} \\
& =\left\vert \left\langle x,e\right\rangle \right\vert ^{2}-t^{2}
\end{align*}%
and%
\begin{align*}
& 2\func{Re}\left[ \frac{\left\langle x,e\right\rangle +t}{t}\right]
-\left\vert \frac{\left\langle x,e\right\rangle +t}{t}\right\vert ^{2} \\
& =2\func{Re}\left[ \frac{\left\langle x,e\right\rangle }{t}+1\right]
-\left\vert \frac{\left\langle x,e\right\rangle }{t}+1\right\vert ^{2} \\
& =\frac{2\func{Re}\left\langle x,e\right\rangle }{t}+2-\frac{\left\vert
\left\langle x,e\right\rangle \right\vert ^{2}}{t^{2}}-\frac{2\func{Re}%
\left\langle x,e\right\rangle }{t}-1=1-\frac{\left\vert \left\langle
x,e\right\rangle \right\vert ^{2}}{t^{2}},
\end{align*}%
then we get%
\begin{align*}
B\left( x,y,e,t\right) & =\left( \left\Vert y\right\Vert ^{2}-\left\vert
\left\langle y,e\right\rangle \right\vert ^{2}\right) \left( \left\vert
\left\langle x,e\right\rangle \right\vert ^{2}-t^{2}\right) \\
& +\left( \left\Vert x\right\Vert ^{2}-\left\vert \left\langle
x,e\right\rangle \right\vert ^{2}\right) \left\vert \left\langle
y,e\right\rangle \right\vert ^{2}\left( 1-\frac{\left\vert \left\langle
x,e\right\rangle \right\vert ^{2}}{t^{2}}\right) \\
& =\left( \left\Vert y\right\Vert ^{2}-\left\vert \left\langle
y,e\right\rangle \right\vert ^{2}\right) \left( \left\vert \left\langle
x,e\right\rangle \right\vert ^{2}-t^{2}\right) \\
& -\left( \left\Vert x\right\Vert ^{2}-\left\vert \left\langle
x,e\right\rangle \right\vert ^{2}\right) \left\vert \left\langle
y,e\right\rangle \right\vert ^{2}\left( \frac{\left\vert \left\langle
x,e\right\rangle \right\vert ^{2}-t^{2}}{t^{2}}\right) \\
& =\left( \left\vert \left\langle x,e\right\rangle \right\vert
^{2}-t^{2}\right) \left[ \left\Vert y\right\Vert ^{2}-\left\vert
\left\langle y,e\right\rangle \right\vert ^{2}-\frac{\left( \left\Vert
x\right\Vert ^{2}-\left\vert \left\langle x,e\right\rangle \right\vert
^{2}\right) \left\vert \left\langle y,e\right\rangle \right\vert ^{2}}{t^{2}}%
\right]
\end{align*}%
for $t\in \mathbb{R},$ $t\neq 0.$

Assume that $\left\langle x,e\right\rangle ,$ $\left\langle y,e\right\rangle
\neq 0$ and $\left\Vert x\right\Vert \neq \left\vert \left\langle
x,e\right\rangle \right\vert ,$ $\left\Vert y\right\Vert \neq \left\vert
\left\langle y,e\right\rangle \right\vert .$

If we take $t=t_{0}\neq 0$ with%
\begin{equation*}
t_{0}^{2}=\left\vert \left\langle x,e\right\rangle \left\langle
y,e\right\rangle \right\vert \sqrt{\frac{\left\Vert x\right\Vert
^{2}-\left\vert \left\langle x,e\right\rangle \right\vert ^{2}}{\left\Vert
y\right\Vert ^{2}-\left\vert \left\langle y,e\right\rangle \right\vert ^{2}}}
\end{equation*}%
then we get 
\begin{align*}
& B\left( x,y,e,t_{0}\right) \\
& =\left( \left\vert \left\langle x,e\right\rangle \right\vert
^{2}-\left\vert \left\langle x,e\right\rangle \left\langle y,e\right\rangle
\right\vert \sqrt{\frac{\left\Vert x\right\Vert ^{2}-\left\vert \left\langle
x,e\right\rangle \right\vert ^{2}}{\left\Vert y\right\Vert ^{2}-\left\vert
\left\langle y,e\right\rangle \right\vert ^{2}}}\right) \\
& \times \left[ \left\Vert y\right\Vert ^{2}-\left\vert \left\langle
y,e\right\rangle \right\vert ^{2}-\frac{\left( \left\Vert x\right\Vert
^{2}-\left\vert \left\langle x,e\right\rangle \right\vert ^{2}\right)
\left\vert \left\langle y,e\right\rangle \right\vert ^{2}}{\left\vert
\left\langle x,e\right\rangle \left\langle y,e\right\rangle \right\vert 
\sqrt{\frac{\left\Vert x\right\Vert ^{2}-\left\vert \left\langle
x,e\right\rangle \right\vert ^{2}}{\left\Vert y\right\Vert ^{2}-\left\vert
\left\langle y,e\right\rangle \right\vert ^{2}}}}\right] \\
& =\frac{\left\vert \left\langle x,e\right\rangle \right\vert }{\sqrt{%
\left\Vert y\right\Vert ^{2}-\left\vert \left\langle y,e\right\rangle
\right\vert ^{2}}}\left( \left\vert \left\langle x,e\right\rangle
\right\vert \sqrt{\left\Vert y\right\Vert ^{2}-\left\vert \left\langle
y,e\right\rangle \right\vert ^{2}}-\left\vert \left\langle y,e\right\rangle
\right\vert \sqrt{\left\Vert x\right\Vert ^{2}-\left\vert \left\langle
x,e\right\rangle \right\vert ^{2}}\right) \\
& \times \left[ \frac{\left\vert \left\langle x,e\right\rangle \right\vert 
\sqrt{\left\Vert y\right\Vert ^{2}-\left\vert \left\langle y,e\right\rangle
\right\vert ^{2}}-\sqrt{\left\Vert x\right\Vert ^{2}-\left\vert \left\langle
x,e\right\rangle \right\vert ^{2}}\left\vert \left\langle y,e\right\rangle
\right\vert }{\left\vert \left\langle x,e\right\rangle \right\vert }\right] 
\sqrt{\left\Vert y\right\Vert ^{2}-\left\vert \left\langle y,e\right\rangle
\right\vert ^{2}} \\
& =\left( \left\vert \left\langle x,e\right\rangle \right\vert \sqrt{%
\left\Vert y\right\Vert ^{2}-\left\vert \left\langle y,e\right\rangle
\right\vert ^{2}}-\left\vert \left\langle y,e\right\rangle \right\vert \sqrt{%
\left\Vert x\right\Vert ^{2}-\left\vert \left\langle x,e\right\rangle
\right\vert ^{2}}\right) ^{2}.
\end{align*}%
By using the inequality (\ref{e.2.5}) we then have 
\begin{align*}
\left\vert \left\langle x,y\right\rangle \right\vert ^{2}& \leq \left\Vert
x\right\Vert ^{2}\left\Vert y\right\Vert ^{2}-B\left( x,y,e,t_{0}\right) \\
& =\left\Vert x\right\Vert ^{2}\left\Vert y\right\Vert ^{2}-\left(
\left\vert \left\langle x,e\right\rangle \right\vert \sqrt{\left\Vert
y\right\Vert ^{2}-\left\vert \left\langle y,e\right\rangle \right\vert ^{2}}%
-\left\vert \left\langle y,e\right\rangle \right\vert \sqrt{\left\Vert
x\right\Vert ^{2}-\left\vert \left\langle x,e\right\rangle \right\vert ^{2}}%
\right) ^{2},
\end{align*}%
which proves the desired result (\ref{e.2.0}).

Now, if $\left\langle x,e\right\rangle =0$ i.e. $x\perp e$, then (\ref{e.2.0}%
) becomes 
\begin{equation*}
\left\vert \left\langle y,e\right\rangle \right\vert ^{2}\left\Vert
x\right\Vert ^{2}+\left\vert \left\langle x,y\right\rangle \right\vert
^{2}\leq \left\Vert x\right\Vert ^{2}\left\Vert y\right\Vert ^{2}
\end{equation*}%
which is trivial for $x=0$ and becomes the Bessel's inequality 
\begin{equation*}
\left\vert \left\langle y,e\right\rangle \right\vert ^{2}+\left\vert
\left\langle y,\frac{x}{\left\Vert x\right\Vert }\right\rangle \right\vert
^{2}\leq \left\Vert y\right\Vert ^{2}
\end{equation*}%
for the orthonormal family $\left\{ e,\frac{x}{\left\Vert x\right\Vert }%
\right\} .$

A similar argument applies for $\left\langle y,e\right\rangle =0.$

Also, if $\left\Vert x\right\Vert ^{2}=\left\vert \left\langle
x,e\right\rangle \right\vert ^{2}$ then by the equality case in Schwarz
inequality for the vectors $x$ and $e$ we get that there exists a constant $%
\gamma $ such that $x=\gamma e.$ In this situation (\ref{e.2.0}) becomes an
equality.

A similar argument applies if $\left\Vert y\right\Vert ^{2}=\left\vert
\left\langle y,e\right\rangle \right\vert ^{2}.$
\end{proof}

\begin{remark}
\label{r.2.1}If $\left( H,\left\langle \cdot ,\cdot \right\rangle \right) $
is a complex inner product space, then $\left( H,\left\langle \cdot ,\cdot
\right\rangle _{r}\right) $ with 
\begin{equation*}
\left\langle x,y\right\rangle _{r}:=\func{Re}\left\langle x,y\right\rangle
\end{equation*}%
is a real inner product space and $\left\langle x,x\right\rangle
^{1/2}=\left\langle x,x\right\rangle _{r}^{1/2}=\left\Vert x\right\Vert $
for $x\in H.$ Therefore by (\ref{e.2.0}) for $\left\langle \cdot ,\cdot
\right\rangle _{r}$ we get%
\begin{multline}
\left\Vert x\right\Vert ^{2}\left\Vert y\right\Vert ^{2}-\left\vert \func{Re}%
\left\langle x,y\right\rangle \right\vert ^{2}  \label{e.2.6} \\
\geq \left( \det \left[ 
\begin{array}{ccc}
\left\vert \func{Re}\left\langle x,e\right\rangle \right\vert &  & \left(
\left\Vert x\right\Vert ^{2}-\left\vert \func{Re}\left\langle
x,e\right\rangle \right\vert ^{2}\right) ^{1/2} \\ 
&  &  \\ 
\left\vert \func{Re}\left\langle y,e\right\rangle \right\vert &  & \left(
\left\Vert y\right\Vert ^{2}-\left\vert \func{Re}\left\langle
y,e\right\rangle \right\vert ^{2}\right) ^{1/2}%
\end{array}%
\right] \right) ^{2}
\end{multline}%
for any $x,$ $y,$ $e\in H$ with $\left\Vert e\right\Vert =1.$
\end{remark}

\section{An Application for $n$-Tuples of Complex Numbers}

Let $x=\left( x_{1},...,x_{n}\right) ,$ $y=\left( y_{1},...,y_{n}\right) ,$ $%
e=\left( e_{1},...,e_{n}\right) \in \mathbb{C}^{n}$ with $%
\sum_{k=1}^{n}\left\vert e_{k}\right\vert ^{2}=1.$ Then by writing the above
inequalities (\ref{e.1.10}) and (\ref{e.2.0}) for the inner product $%
\left\langle x,y\right\rangle :=\sum_{k=1}^{n}x_{k}\overline{y}_{k}$ we have
for $p\geq 2,$ that%
\begin{multline}
\left( \sum_{k=1}^{n}\left\vert x_{k}\right\vert ^{2}\right) ^{p/2}\left(
\sum_{k=1}^{n}\left\vert y_{k}\right\vert ^{2}\right) ^{p/2}-\left\vert
\sum_{k=1}^{n}x_{k}\overline{y}_{k}\right\vert ^{p}  \label{e.3.1} \\
\geq \left( \det \left[ 
\begin{array}{ccc}
\left( \sum_{k=1}^{n}\left\vert x_{k}\right\vert ^{2}\right) ^{1/2} &  & 
\left( \left( \sum_{k=1}^{n}\left\vert x_{k}\right\vert ^{2}\right)
^{p/2}-\left\vert \sum_{k=1}^{n}x_{k}\overline{e}_{k}\right\vert ^{p}\right)
^{1/p} \\ 
&  &  \\ 
\left( \sum_{k=1}^{n}\left\vert y_{k}\right\vert ^{2}\right) ^{1/2} &  & 
\left( \left( \sum_{k=1}^{n}\left\vert y_{k}\right\vert ^{2}\right)
^{p/2}-\left\vert \sum_{k=1}^{n}y_{k}\overline{e}_{k}\right\vert ^{p}\right)
^{1/p}%
\end{array}%
\right] \right) ^{p},
\end{multline}%
and%
\begin{multline}
\sum_{k=1}^{n}\left\vert x_{k}\right\vert ^{2}\sum_{k=1}^{n}\left\vert
y_{k}\right\vert ^{2}-\left\vert \sum_{k=1}^{n}x_{k}\overline{y}%
_{k}\right\vert ^{2}  \label{e.3.2} \\
\geq \left( \det \left[ 
\begin{array}{ccc}
\left\vert \sum_{k=1}^{n}x_{k}\overline{e}_{k}\right\vert &  & \left(
\sum_{k=1}^{n}\left\vert x_{k}\right\vert ^{2}-\left\vert \sum_{k=1}^{n}x_{k}%
\overline{e}_{k}\right\vert ^{2}\right) ^{1/2} \\ 
&  &  \\ 
\left\vert \sum_{k=1}^{n}y_{k}\overline{e}_{k}\right\vert &  & \left(
\sum_{k=1}^{n}\left\vert y_{k}\right\vert ^{2}-\left\vert \sum_{k=1}^{n}y_{k}%
\overline{e}_{k}\right\vert ^{2}\right) ^{1/2}%
\end{array}%
\right] \right) ^{2}.
\end{multline}

If we take $e_{m}=1$ for $m\in \left\{ 1,...,n\right\} $ and $e_{k}=0$ for
any $k\in \left\{ 1,...,n\right\} ,$ $k\neq m,$ then $\sum_{k=1}^{n}\left%
\vert e_{k}\right\vert ^{2}=1$ and by (\ref{e.3.1}) and (\ref{e.3.2}) we get%
\begin{multline}
\left( \sum_{k=1}^{n}\left\vert x_{k}\right\vert ^{2}\right) ^{p/2}\left(
\sum_{k=1}^{n}\left\vert y_{k}\right\vert ^{2}\right) ^{p/2}-\left\vert
\sum_{k=1}^{n}x_{k}\overline{y}_{k}\right\vert ^{p}  \label{e.3.3} \\
\geq \max_{m\in \left\{ 1,...,n\right\} }\left( \det \left[ 
\begin{array}{ccc}
\left( \sum_{k=1}^{n}\left\vert x_{k}\right\vert ^{2}\right) ^{1/2} &  & 
\left( \left( \sum_{k=1}^{n}\left\vert x_{k}\right\vert ^{2}\right)
^{p/2}-\left\vert x_{m}\right\vert ^{p}\right) ^{1/p} \\ 
&  &  \\ 
\left( \sum_{k=1}^{n}\left\vert y_{k}\right\vert ^{2}\right) ^{1/2} &  & 
\left( \left( \sum_{k=1}^{n}\left\vert y_{k}\right\vert ^{2}\right)
^{p/2}-\left\vert y_{m}\right\vert ^{p}\right) ^{1/p}%
\end{array}%
\right] \right) ^{p},
\end{multline}%
and%
\begin{multline}
\sum_{k=1}^{n}\left\vert x_{k}\right\vert ^{2}\sum_{k=1}^{n}\left\vert
y_{k}\right\vert ^{2}-\left\vert \sum_{k=1}^{n}x_{k}\overline{y}%
_{k}\right\vert ^{2}  \label{e.3.4} \\
\geq \max_{m\in \left\{ 1,...,n\right\} }\left( \det \left[ 
\begin{array}{ccc}
\left\vert x_{m}\right\vert &  & \left( \sum_{1\leq k\neq m\leq n}\left\vert
x_{k}\right\vert ^{2}\right) ^{1/2} \\ 
&  &  \\ 
\left\vert y_{m}\right\vert &  & \left( \sum_{1\leq k\neq m\leq n}\left\vert
y_{k}\right\vert ^{2}\right) ^{1/2}%
\end{array}%
\right] \right) ^{2}.
\end{multline}

For $p=2$ we get from (\ref{e.3.3}) the simpler inequality%
\begin{multline}
\sum_{k=1}^{n}\left\vert x_{k}\right\vert ^{2}\sum_{k=1}^{n}\left\vert
y_{k}\right\vert ^{2}-\left\vert \sum_{k=1}^{n}x_{k}\overline{y}%
_{k}\right\vert ^{2}  \label{e.3.5} \\
\geq \max_{m\in \left\{ 1,...,n\right\} }\left( \det \left[ 
\begin{array}{ccc}
\left( \sum_{k=1}^{n}\left\vert x_{k}\right\vert ^{2}\right) ^{1/2} &  & 
\left( \sum_{1\leq k\neq m\leq n}\left\vert x_{k}\right\vert ^{2}\right)
^{1/2} \\ 
&  &  \\ 
\left( \sum_{k=1}^{n}\left\vert y_{k}\right\vert ^{2}\right) ^{1/2} &  & 
\left( \sum_{1\leq k\neq m\leq n}\left\vert y_{k}\right\vert ^{2}\right)
^{1/2}%
\end{array}%
\right] \right) ^{2}.
\end{multline}

If we take $e_{k}=\frac{1}{\sqrt{n}}$ for $k\in \left\{ 1,...,n\right\} $,
then $\sum_{k=1}^{n}\left\vert e_{k}\right\vert ^{2}=1$ and by (\ref{e.3.1})
and (\ref{e.3.2}) we get%
\begin{multline}
\left( \sum_{k=1}^{n}\left\vert x_{k}\right\vert ^{2}\right) ^{p/2}\left(
\sum_{k=1}^{n}\left\vert y_{k}\right\vert ^{2}\right) ^{p/2}-\left\vert
\sum_{k=1}^{n}x_{k}\overline{y}_{k}\right\vert ^{p}  \label{e.3.6} \\
\geq n^{p}\left( \det \left[ 
\begin{array}{ccc}
\left( \frac{1}{n}\sum_{k=1}^{n}\left\vert x_{k}\right\vert ^{2}\right)
^{1/2} &  & \left( \left( \frac{1}{n}\sum_{k=1}^{n}\left\vert
x_{k}\right\vert ^{2}\right) ^{p/2}-\left\vert \frac{1}{n}%
\sum_{k=1}^{n}x_{k}\right\vert ^{p}\right) ^{1/p} \\ 
&  &  \\ 
\left( \frac{1}{n}\sum_{k=1}^{n}\left\vert y_{k}\right\vert ^{2}\right)
^{1/2} &  & \left( \left( \frac{1}{n}\sum_{k=1}^{n}\left\vert
y_{k}\right\vert ^{2}\right) ^{p/2}-\left\vert \frac{1}{n}%
\sum_{k=1}^{n}y_{k}\right\vert ^{p}\right) ^{1/p}%
\end{array}%
\right] \right) ^{p}
\end{multline}%
and%
\begin{multline}
\sum_{k=1}^{n}\left\vert x_{k}\right\vert ^{2}\sum_{k=1}^{n}\left\vert
y_{k}\right\vert ^{2}-\left\vert \sum_{k=1}^{n}x_{k}\overline{y}%
_{k}\right\vert ^{2}  \label{e.3.7} \\
\geq n^{2}\left( \det \left[ 
\begin{array}{ccc}
\left\vert \frac{1}{n}\sum_{k=1}^{n}x_{k}\right\vert &  & \left( \frac{1}{n}%
\sum_{k=1}^{n}\left\vert x_{k}\right\vert ^{2}-\left\vert \frac{1}{n}%
\sum_{k=1}^{n}x_{k}\right\vert ^{2}\right) ^{1/2} \\ 
&  &  \\ 
\left\vert \frac{1}{n}\sum_{k=1}^{n}y_{k}\right\vert &  & \left( \frac{1}{n}%
\sum_{k=1}^{n}\left\vert y_{k}\right\vert ^{2}-\left\vert \frac{1}{n}%
\sum_{k=1}^{n}y_{k}\right\vert ^{2}\right) ^{1/2}%
\end{array}%
\right] \right) ^{2}.
\end{multline}

The inequality (\ref{e.3.7}) has been obtained recently for real numbers by
S. G. Walker in \cite{Wa}, where some interesting applications for the
celebrated Cramer-Rao inequality are provided as well.

\end{document}